\def\@citex[#1]#2{\if@filesw\immediate\write\@auxout{\string\citation{#2}}\fi
  \def\@citea{}\@cite{\@for\@citeb:=#2\do
    {\@citea\def\@citea{\@citesep}\@ifundefined
       {b@\@citeb}{{\bf ?}\@warning
       {Citation `\@citeb' on page \thepage \space undefined}}%
{\csname b@\@citeb\endcsname}}}{#1}}
\def\@citesep{; }
\font\cyr=wncyr10 scaled \magstep1%
\def\Sha{\text{\cyr Sh}}
\def\nr{_{\textrm{nr}}}
\DeclareMathOperator{\Br}{Br}
\DeclareMathOperator{\Outc}{Out_{\text{\rm{c}}}}
\DeclareMathOperator{\Autc}{Aut_{\text{\rm{c}}}}
\newtheoremstyle{Kang}{}{}{\itshape}{}{\bf}{}{.5em}{}
\theoremstyle{Kang}
\newtheorem{theorem}{Theorem}[section]
\newtheorem{lemma}[theorem]{Lemma}
\newtheorem{coro}[theorem]{Corollary}
\newtheorem{prop}[theorem]{Proposition}
\newtheoremstyle{Kremark}{}{}{}{}{\bf}{}{.5em}{}
\theoremstyle{Kremark}
\newtheorem*{remark}{Remark.}
\newtheorem{defn}[theorem]{Definition}
\newtheorem{other}{}
\newtheorem{question}[theorem]{Question}
\title{The Bogomolov multiplier of rigid finite groups}
\author{Ming-chang Kang \\[3mm]
Department of Mathematics and \\ Taida Institute of Mathematical Sciences,\\
National Taiwan University \\ Taipei, Taiwan \\
E-mail: kang@math.ntu.edu.tw \\[5mm]
Boris Kunyavski\u\i  \\[3mm]
Department of Mathematics,\\
Bar-Ilan University \\ Ramat Gan, Israel \\
E-mail: kunyav@macs.biu.ac.il}
\date{}
\begin{document}

\maketitle

\footnote{\textit{\!\!\! Mathematics Subject Classification $(2010)$}: Primary 14E08, 14L30, 20D45, Secondary 20J06.}
\footnote{\textit{\!\!\! Keywords}:
Bogomolov multiplier, unramified Brauer group, Shafarevich--Tate set, class-preserving automorphisms.}

\begin{abstract}
{\noindent\bf Abstract.} The Bogomolov multiplier of a finite group $G$ is defined as the subgroup
of the Schur multiplier consisting of the cohomology classes vanishing after
restriction to all abelian subgroups of $G$.  This invariant of $G$ plays an
important role in birational geometry of quotient spaces $V/G$. We show that
in many cases the vanishing of the Bogomolov multiplier is guaranteed by
the rigidity of $G$ in the sense that it has no outer class-preserving automorphisms.
\end{abstract}

\section{Introduction}

The main object of this note is the following invariant of a finite
group $G$:

\begin{equation} \label{def:B0}
B_0(G)=\ker [H^2(G,\mathbb Q/\mathbb Z)\to \bigoplus_{A\subset
G}H^2(A,\mathbb Q/\mathbb Z)]
\end{equation}
where $A$ runs over all abelian subgroups of $G$. Bogomolov showed
in \cite{Bo} that this group coincides with the unramified Brauer
group $\Br\nr(V/G)$ where $V$ is a vector space defined over an
algebraically closed field $k$ of characteristic zero equipped
with a faithful, linear, generically free action of $G$. The
latter group is an important birational invariant of the quotient
variety $V/G$, introduced by Saltman in \cite{Sa}, \cite{Sa2}. He
used it for producing the first counter-example (for $G$ of order
$p^9$) to a problem by Emmy Noether on rationality of fields of
invariants $k(x_1,\dots ,x_n)^G$, where $k$ is algebraically
closed and $G$ acts on the variables $x_i$ by permutations.
Formula (\ref{def:B0}) provides a purely group-theoretic intrinsic
recipe for the computation of $\Br\nr(V/G)$. In the same paper
\cite{Bo} Bogomolov showed that it can be simplified even further:
one can replace $A$ with the set of all {\it bicyclic} subgroups
of $G$. In the case where $G$ is a $p$-group, he also suggested a
more explicit way for computing $B_0(G)$ and produced smaller
counter-examples. For some further activity concerning the values
of $B_0(G)$ for $p$-groups, as well as for corrigenda to some
assertions of \cite{Bo}, the interested reader is referred to
\cite{CHKK},  \cite{HKK},  \cite{Mo1}. In particular, it turned
out that the smallest power of $p$ for which there exists a
$p$-group $G$ with $B_0(G)\ne 0$ is 5 (for odd $p$) and not 6, as
claimed in \cite{Bo}; see \cite{Mo1} for details.

In the present paper, our viewpoint is a little different. Namely,
we address the following question: what group-theoretic properties
of $G$ can guarantee that $B_0(G)=0$? The first large family of
groups (outside $p$-groups) for which we have $B_0(G)=0$ is that of
all {\it simple} groups \cite{Ku1}. Thus, a natural question to ask
is what common properties, shared by simple groups and ``small''
$p$-groups, are responsible for vanishing of $B_0(G)$. Our vague
answer is that in a certain sense, both are {\it rigid}.

More precisely, the rigidity property we are talking about is the
following one. Let a group $G$ act on itself by conjugation, and let
$H^1(G,G)$ be the first cohomology pointed set. Denote by $\Sha (G)$
the subset of $H^1(G,G)$ consisting of the cohomology classes
becoming trivial after restricting to every cyclic subgroup of $G$
and call it the Shafarevich--Tate set of $G$ (this terminology was
introduced by T.~Ono \cite{On}, alluding to arithmetic-geometric
counterparts arising from the action of the Galois group of a number
field $k$ on the set of rational points of an algebraic $k$-group).
We say that $G$ is $\Sha$-rigid if the set $\Sha (G)$ consists of
one element; see \cite{Ku2} where this terminology was introduced in
view of relationships with other rigidity properties of $G$. In the
case where $G$ is finite, $\Sha (G)$ coincides with another
local-global invariant $\Outc (G)$, which was introduced by Burnside
\cite{Bu1} about a century ago: it is the quotient of the group
$\Autc(G)$ of class-preserving automorphisms of $G$ by the subgroup
of inner automorphisms (an automorphism is called class-preserving
if it moves each conjugacy class of $G$ to itself). In particular,
if $G$ is finite, then $\Sha (G)$ is a finite group, and $G$ is
$\Sha$-rigid if and only if every locally inner automorphism
$\varphi\colon G\to G$ (i.e., $\varphi (g)=aga^{-1}$ for some $a$
depending on $g$) is inner (i.e., $a$ can be chosen independent of
$g$).

Certain classes of finite groups are known to consist of
$\Sha$-rigid groups. The following proposition collects some data
from various sources.

\begin{prop} \label{prop:rigid}
The following finite groups are $\Sha$-rigid:

\begin{itemize}
\item[\rm{(i)}] symmetric groups {\rm{\cite{OW};}}
\item[\rm{(ii)}] simple groups {\rm{\cite{FS};}} \item[\rm{(iii)}]
$p$-groups of order at most $p^4$ {\rm{\cite{KV1};}}
\item[\rm{(iv)}] $p$-groups having a cyclic maximal subgroup
{\rm{\cite{KV2};}} \item[\rm{(v)}] $p$-groups having a cyclic
subgroup of index $p^2$ {\rm{\cite{KV3}, \cite{FN};}}
\item[\rm{(vi)}] abelian-by-cyclic groups {\rm{\cite{HJ};}}
\item[\rm{(vii)}] groups such that the Sylow $p$-subgroups are
cyclic for odd $p$, and either cyclic, or dihedral, or generalized
quaternion for $p=2$ {\rm{\cite{He1} (}}see {\rm{\cite{Su},
\cite{Wa}}} for a classification of such groups$);$
\item[\rm{(viii)}] Blackburn groups {\rm{\cite{He2}, \cite{HL};}}
\item[\rm{(ix)}] extraspecial $p$-groups {\rm{\cite{KV2};}}
\item[\rm{(x)}] primitive supersolvable groups {\rm{\cite{La};}}
\item[\rm{(xi)}] unitriangular matrix groups over $\mathbb F_p$
and the quotients of their lower central series {\rm{\cite{BVY};}}
\item[\rm{(xii)}] central products of $\Sha$-rigid groups
{\rm{\cite{KV2}.}}
\end{itemize}
\end{prop}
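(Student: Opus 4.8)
The plan is to treat the twelve items one at a time, since Proposition~\ref{prop:rigid} is a compilation of results scattered through the literature rather than a single statement admitting a uniform argument. Throughout one uses the identification $\Sha(G)=\Outc(G)$ recalled above, so that in every case the task is to prove $\Autc(G)=\fn{Inn}(G)$, i.e.\ that a locally inner automorphism of $G$ is inner.

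For (i) one argues that an automorphism of $S_n$ sending each element to a conjugate preserves cycle type; since for $n\neq 6$ every automorphism of $S_n$ is already inner, the only point to settle is $n=6$, where one checks that the exceptional outer automorphism fails to preserve cycle type (it does not fix the class of transpositions), so $\Autc(S_n)=\fn{Inn}(S_n)$ for all $n$; this is \cite{OW}. For (ii) the essential input is the classification of finite simple groups: Feit and Seitz \cite{FS} proceed family by family --- alternating, Lie type, sporadic --- using rationality properties of conjugacy classes and character theory to show $\Outc=1$ in each case. I expect this to be the main obstacle: there is no known classification-free proof, so ``reproving'' (ii) from scratch is not realistic, and one genuinely must cite \cite{FS}.

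Items (iii)--(v), (ix), (xi) are proved by $p$-group computations. For (iii) one enumerates the groups of order at most $p^4$ and computes $\Autc$ directly \cite{KV1}; likewise (ix) treats extraspecial $p$-groups via their explicit presentations \cite{KV2}, and (xi) uses the known commutator structure of $\fn{UT}_n(\mathbb F_p)$ and the quotients of its lower central series \cite{BVY}. For (iv) and (v) one exploits the rigidity forced by a cyclic subgroup of index $p$ or $p^2$: such $p$-groups fall into a short classical list of families, on which $\Outc(G)=1$ is verified case by case \cite{KV2}, \cite{KV3}, \cite{FN}.

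The remaining items rest on structural reductions. For (vi), when $A\trianglelefteq G$ is abelian with $G/A$ cyclic one normalizes a given $\varphi\in\Autc(G)$ on a preimage of a generator of $G/A$ and then shows it acts as an inner automorphism on $A$, whence $\Outc(G)=1$ \cite{HJ}. Item (vii) reduces, via the classification of groups whose Sylow subgroups are cyclic (respectively dihedral or generalized quaternion at $2$) \cite{Su}, \cite{Wa}, to a manageable family analyzed in \cite{He1}; (viii) and (x) are the Blackburn-group and primitive-supersolvable cases of \cite{He2}, \cite{HL} and \cite{La}. Finally, for (xii) let $G=G_1*G_2$ be a central product with $\Outc(G_i)=1$; given $\varphi\in\Autc(G)$ one restricts it to each factor, replaces $\varphi$ by an inner automorphism on each $G_i$, and checks that these adjustments can be chosen to agree on the amalgamated central subgroup, which yields $\varphi\in\fn{Inn}(G)$; this is the argument of \cite{KV2}.
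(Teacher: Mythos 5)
Your proposal takes essentially the same approach as the paper: Proposition~\ref{prop:rigid} is presented there as a compilation of known results, with no proof beyond the citations to \cite{OW}, \cite{FS}, \cite{KV1}--\cite{KV3}, \cite{FN}, \cite{HJ}, \cite{He1}, \cite{He2}, \cite{HL}, \cite{La}, \cite{BVY} (and the survey \cite{Ya2}), and your item-by-item attributions match these sources. Your supplementary sketches (the cycle-type argument for $S_n$ including the $n=6$ check, and the adjustment-by-inner-automorphisms argument for central products) are correct but go beyond what the paper itself records.
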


See \cite{Ya2} for a survey and some details.

Our main result states that the Bogomolov multiplier of most of the
groups listed above is trivial.

\begin{theorem} \label{th:main}
Let $G$ be one of the groups listed in items {\text{\rm{(i)--(ix)}}} of
Proposition $\ref{prop:rigid}$. Then $B_0(G)=0$.
\end{theorem}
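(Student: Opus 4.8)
The plan is to dualize the invariant and then dispose of the nine families one by one. By the universal coefficient theorem and the definition of $B_0(G)$, this group is the Pontryagin dual of $M(G)/M_0(G)$, where $M(G)=H_2(G,\mathbb{Z})$ is the Schur multiplier and $M_0(G)=\sum_{A}\operatorname{im}\bigl(M(A)\to M(G)\bigr)$, the sum being over the maps induced by the inclusions of all abelian subgroups $A\le G$; by Bogomolov's reduction \cite{Bo} it is enough to let $A$ run over the \emph{bicyclic} subgroups, i.e.\ those of the form $C_m\times C_n$. Thus Theorem \ref{th:main} is equivalent to the assertion that for each group $G$ in items (i)--(ix) one has $M(G)=M_0(G)$, i.e.\ every class of $H_2(G,\mathbb{Z})$ is pushed forward from a two-generator abelian subgroup. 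We shall not prove the blanket implication $\Sha(G)=1\Rightarrow B_0(G)=0$ --- we do not know whether it holds, which is exactly why items (x)--(xii) are left out --- but rather observe, family by family, that the same structural information used to establish $\Sha$-rigidity also delivers $M(G)=M_0(G)$.

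The simple groups (item (ii)) are handled by \cite{Ku1}, whose content is precisely $B_0(G)=0$; this is the deepest single input. Several of the $p$-group families collapse to one remark: for a $p$-group $G$ with cyclic commutator subgroup $[G,G]$ one has $B_0(G)=0$ (a standard consequence of Bogomolov's criterion; see \cite{HKK}). This applies to the extraspecial $p$-groups of item (ix), where $[G,G]=Z(G)$ has order $p$, and to the $p$-groups with a cyclic maximal subgroup $M$ of item (iv), where $|G:M|=p$ forces $[G,G]\le M$. Items (iii) and (vi) --- $p$-groups of order at most $p^4$, and abelian-by-cyclic groups --- are covered by the known vanishing of $B_0$ for groups with a normal abelian subgroup with cyclic quotient, together with the short classification lists in orders $p^3$ and $p^4$; see \cite{Bo}, \cite{HKK}. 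For the symmetric groups (item (i)) one notes that $M(S_n)$ is trivial for $n\le 3$ and of order $2$ for $n\ge 4$, and that for $n\ge 4$ its nontrivial class restricts nontrivially to the Klein four-subgroup $\langle(1\,2)(3\,4),(1\,3)(2\,4)\rangle$, since the preimage of this subgroup in a double cover of $S_n$ is a quaternion group; hence $M(C_2\times C_2)\to M(S_n)$ is onto and $B_0(S_n)=0$.

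The three remaining families need more work but no new idea. For the $p$-groups having a cyclic subgroup of index $p^2$ (item (v)) and for the Blackburn groups (item (viii)) one invokes the explicit classifications of these groups --- the very ones underpinning the proofs of $\Sha$-rigidity in \cite{KV3}, \cite{FN} and in \cite{He2}, \cite{HL} --- and verifies $M(G)=M_0(G)$ for each of the finitely many types; Moravec's theorem that $B_0$ is an isoclinism invariant substantially shortens this. The subtle case is item (vii): by the classification \cite{Su}, \cite{Wa} of finite groups with cyclic, dihedral, or generalized quaternion Sylow $2$-subgroup (and cyclic odd Sylow subgroups), such a $G$ is built by a chain of extensions out of cyclic, dihedral and quaternion $2$-groups, metacyclic odd-order groups, and possibly the simple groups $\mathrm{PSL}_2(q)$ and $A_7$; one must then carry $B_0(G)=0$ along those extensions, combining the simple-group result \cite{Ku1} with the behaviour of $B_0$ in a central or split extension --- via the inflation--restriction and five-term sequences for $H^2(-,\mathbb{Q}/\mathbb{Z})$ --- and with the metacyclic analysis of the solvable constituents.

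This last transport step is, I expect, the main obstacle: $B_0$ is not monotone under passage to subgroups or quotients, so the vanishing for $\mathrm{PSL}_2(q)$ does not pass formally to the groups assembled from it, and one must show by hand that the classes inflated from the relevant quotients are already detected on bicyclic subgroups sitting inside the simple part, while the surviving classes originate in the metacyclic kernel. Assembling the $p$-group classification data for items (v) and (viii) is laborious but routine by comparison, and items (i)--(iv), (vi), (ix) are essentially formal once the appropriate structural fact is quoted.
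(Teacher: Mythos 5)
Where your sketch is concrete it is fine, and in places it genuinely differs from the paper: the paper disposes of (i) and (iii)--(v) by quoting rationality results (Noether's theorem, \cite{CK}, \cite{HK}, \cite{Ka1}), of (vi) by retract rationality \cite{Ka2} plus \cite[Proposition 1.8]{Sa2}, and of (ix) by an explicit computation with Bogomolov's Lemma~5.1 after an isoclinism reduction to exponent $p$ (Lemma \ref{lem:iso}), whereas you argue (i) directly via the Schur multiplier of $S_n$ and the $Q_8$-preimage of a Klein four-subgroup (correct), and (iv), (ix) via a ``cyclic commutator subgroup implies $B_0=0$'' criterion, which at least needs a precise reference rather than a vague pointer to \cite{HKK} (note that extraspecial groups of order $p^{2n+1}$, $n\ge 2$, are \emph{not} abelian-by-cyclic, so for (ix) you really depend on that criterion). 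The genuine gaps are in (v), (vii) and (viii): for (v) and (viii) you only assert that one could verify $M(G)=M_0(G)$ type by type from the classifications (``laborious but routine''); that is a plan, not a proof, and for (v) what you propose to re-derive by hand is in substance Kang's theorem \cite{Ka1}, which the paper simply cites.

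The missing idea is the Sylow reduction: by \cite{Bo}, \cite{BMP}, $B_0(G)$ embeds into the direct sum of $B_0(S_p)$ over the Sylow subgroups $S_p$ of $G$, so it suffices to kill $B_0$ of the Sylow subgroups. This settles (vii) in two lines: the Sylow subgroups are cyclic for odd $p$ and cyclic, dihedral or generalized quaternion for $p=2$, hence have a cyclic maximal subgroup and fall under (iv); no Suzuki--Wall classification of the ambient group and no ``transport of $B_0=0$ along extensions'' is needed. The obstacle you single out as the main difficulty --- that vanishing for $\mathrm{PSL}_2(q)$ does not pass formally to groups built from it --- is therefore self-inflicted, and as sketched your route for (vii) does not go through. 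The same reduction is how the paper handles the non-nilpotent Blackburn groups in (viii): using Blackburn's classification \cite{Bl}, the types other than (c) are direct products of quaternion, abelian and abelian-by-cyclic groups (where the K\"unneth-type formula $B_0(G_1\times G_2)=B_0(G_1)\times B_0(G_2)$ of \cite{Ka3} enters), and in the remaining type the odd Sylow subgroups are abelian while the Sylow $2$-subgroup is Dedekind or again a Blackburn $2$-group, so one concludes by the Sylow embedding --- not by an unspecified case-by-case verification.
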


This theorem is proved in \ref{sec:main} Some open questions arising
from this ``experimental'' observation are briefly discussed in
\ref{sec:concl}

\bigskip

{\noindent} {\it Notational conventions}.
Unless otherwise stated, $G$ denotes a finite group and $k$ stands
for an algebraically closed field of characteristic zero.

\section{Main results and proofs} \label{sec:main}

We start the proof of Theorem \ref{th:main} by observing that most
of the work had already been done earlier. Namely, the assertions
referring to items (i)--(vii) of Proposition \ref{prop:rigid} can
be extracted from the literature, sometimes in a somewhat stronger
form, stating that the relevant quotient varieties $V/G$ are
retract rational, stably rational, or even rational. Item (i) is a
direct consequence of the classical theorem by Emmy Noether
asserting the rationality of the field of invariants $k(x_1,\dots
,x_n)^{S_n}$ with respect to the natural permutation action of the
symmetric group $S_n$ (which follows from the theorem on
elementary symmetric functions). The rationality of $V/G$ is also
known in cases (iii) \cite{CK}, (iv) \cite{HK}, (v) \cite{Ka1}. In
case (vi) the variety $V/G$ is retract rational \cite{Ka2}, which
is weaker than rationality but enough to guarantee vanishing of
$B_0(G)$ \cite[Proposition 1.8]{Sa2}. The Bogomolov multiplier is
zero in case (ii) \cite{Ku1}. In case (vii), one can notice that
in view of \cite{Bo}, \cite{BMP}, it is enough to establish that
$B_0(S)=0$ for all Sylow subgroups $S$ of $G$. This is obvious for
odd primes because in that case $S$ is cyclic, and the groups
appearing in the case $p=2$ are all included in case (iv) above.
Thus it remains to consider cases (viii) and (ix), which
constitute the main body of the paper. They are treated separately
below.

\begin{prop} \label{prop:extra}
If $G$ is an extraspecial $p$-group, then $B_0(G)=0$.
\end{prop}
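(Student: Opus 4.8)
The plan is to compute $B_0(G)$ directly from its group-theoretic description using the known structure theory of extraspecial $p$-groups. Recall that an extraspecial $p$-group $G$ of order $p^{2n+1}$ has center $Z(G)=[G,G]=\Phi(G)\cong\mathbb{Z}/p$, and the quotient $V:=G/Z(G)$ is elementary abelian of order $p^{2n}$, carrying a non-degenerate alternating form induced by the commutator pairing $V\times V\to Z(G)$. First I would invoke the reduction, already cited in the excerpt via \cite{Bo} and the bicyclic-subgroup refinement, that $B_0(G)$ is the subgroup of $H^2(G,\mathbb{Q}/\mathbb{Z})$ restricting trivially to all abelian (equivalently, bicyclic) subgroups; equivalently I would pass to the dual description in terms of $H_2(G,\mathbb{Z})$ and the Bogomolov multiplier as the quotient of the exterior square, $B_0(G)=M(G)/\langle\text{images of }\wedge^2 A\rangle$ where $A$ ranges over abelian subgroups — this is the formulation most amenable to extraspecial groups because every pair of commuting elements generates an abelian subgroup.

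The key steps would then be: (1) Recall the Schur multiplier of extraspecial $p$-groups, which is classical (Beyl--Tappe, or the references in \cite{CHKK}): for $p$ odd, $M(G)\cong(\mathbb{Z}/p)^{2n^2-n-1}$ for the exponent-$p$ type and is smaller for exponent $p^2$; for $p=2$ the extraspecial groups are central products of copies of $D_8$ and $Q_8$, with correspondingly small multipliers. (2) For the two ``small'' cases, namely $n=1$ (order $p^3$) and the quaternion/dihedral building blocks, one can quote that these already appear in item (iii)/(iv) handling above, or check $B_0=0$ by hand since $M(G)$ is cyclic of order dividing $p$ and a single abelian subgroup already kills it. (3) For the general case, the efficient route is to use the commutator/exterior-square description: I would show that the subgroup of $\wedge^2 G$ (or of $M(G)$) generated by exterior squares of abelian subgroups is everything, by exploiting that $V=G/Z(G)$ is spanned by isotropic subspaces for the alternating form — the preimage of a maximal isotropic subspace of $V$ is a maximal abelian subgroup $A$ of $G$ of order $p^{n+1}$, and varying $A$ over enough such isotropic subspaces produces generators of all of $M(G)$. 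Alternatively, and perhaps more cleanly, I would cite the explicit computation of $B_0$ for extraspecial (or more generally for all $p$-groups of small nilpotency behavior) from \cite{CHKK} or \cite{Mo1}, where the vanishing of $B_0$ for extraspecial $p$-groups should already be recorded or follow immediately from their formulas.

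The main obstacle I anticipate is the general-$n$ exterior-square computation: one must verify that the span of $\wedge^2 A$ over maximal abelian $A$ really exhausts $M(G)$, which requires a concrete handle on a presentation of $M(G)$ compatible with the symplectic structure on $V$. A slick way around the bookkeeping is to use Bogomolov's characterization $B_0(G)\hookrightarrow H^2$ together with the fact that for a $p$-group $G$ with $[G,G]\cong\mathbb{Z}/p$ central, every nontrivial class in $H^2(G,\mathbb{Q}/\mathbb{Z})$ that is ``commutator-like'' is detected on a bicyclic subgroup — more precisely, one reduces to showing that the so-called \emph{Bogomolov multiplier} $\widetilde{B_0}(G)$, computed as $(\wedge^2 G)/M_0(G)$ with $M_0(G)$ the subgroup generated by commuting pairs, vanishes; and for extraspecial groups $M_0(G)$ is generated by the elements $x\wedge y$ with $[x,y]=1$, which by non-degeneracy of the form span a subgroup of full rank. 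I would make this last rank count the technical heart of the argument, and if it proves delicate I would fall back on the published computation in \cite{CHKK}.
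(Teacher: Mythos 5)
Your overall framework is sound and is essentially the homological dual of what the paper does: the paper uses Bogomolov's Lemma~5.1, writing $B_0(G)\cong K^2_{\mathrm{max}}/K^2$ for the central extension $0\to Z\to G\to U\to 0$ and checking that the hyperplane $S^2=(K^2)^\perp\subset\bigwedge^2U$ is spanned by its decomposable elements, which is exactly the dual of your claim that the images of $\bigwedge^2A$ over abelian subgroups $A$ (equivalently, the wedges $x\wedge y$ with $[x,y]=1$, i.e.\ decomposables supported on isotropic planes for the commutator form) exhaust everything. However, your proposal stops precisely at the point where the actual proof has to happen. The statement you defer --- that ``by non-degeneracy of the form'' the commuting-pair wedges span a subgroup of full rank, equivalently that every element of the kernel of $\gamma\colon\bigwedge^2U\to Z$ is a sum of decomposables $u\wedge v$ with $\omega(u,v)=0$ --- is the entire content of the proposition, and non-degeneracy alone does not deliver it; one needs a concrete verification. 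The paper does this by exhibiting the explicit identity
$$
x_i\wedge x_{i+n}-x_n\wedge x_{2n} = (x_i-x_n)\wedge(x_{i+n}+x_{2n}) - x_i\wedge x_{2n} + x_n\wedge x_{i+n},
$$
which shows each generator of the hyperplane $S^2$ is a sum of isotropic decomposables; without this (or an equivalent symplectic-module argument) your step (3) is an announcement, not a proof.

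Two further points. First, your fallback citations do not close the gap: \cite{CHKP}/\cite{CHKK} treat groups of order $32$ and $64$ and \cite{Mo1} treats order $p^5$, so they cover at most the extraspecial groups with $n\le 2$ (and $64$ is not even an extraspecial order), not general $p^{2n+1}$. Second, you gesture at handling the exponent-$p^2$ type and the $p=2$ central products of $D_8$ and $Q_8$ via their ``smaller'' Schur multipliers, but smallness of $M(G)$ by itself does not give $B_0(G)=0$; the paper avoids a case-by-case treatment by invoking Moravec's isoclinism invariance of $B_0$ (Lemma~\ref{lem:iso}) to reduce all extraspecial groups of a given order to the exponent-$p$ one, after which a single explicit computation suffices. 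You should either incorporate that reduction or carry out your wedge computation separately for each isomorphism type.
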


Before starting the proof, we present the following useful
observation. Recall that groups $G_1$ and $G_2$ are called {\it
isoclinic} if they have isomorphic quotients $G_i/Z(G_i)$ and
derived subgroups $[G_i,G_i]$, and these isomorphisms are compatible
(see, e.g., \cite[p.~285]{Be}).

\begin{lemma} \cite{Mo2} \label{lem:iso}
If $G_1$ and $G_2$ are isoclinic, then $B_0(G_1)\cong B_0(G_2)$.
\end{lemma}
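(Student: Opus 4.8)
The starting point is a homological reformulation of $B_0(G)$ that makes isoclinism invariance transparent, and for this I will invoke \cite{Mo2} (building on \cite{Bo}). Write $G':=[G,G]$, let $M(G):=H_2(G,\mathbb Z)$ be the Schur multiplier, realized as the kernel of the commutator map $G\wedge G\to G'$ from the nonabelian exterior square (so that $M(G)\hookrightarrow G\wedge G\twoheadrightarrow G'$ is a central extension), and let $M_0(G)\le M(G)$ be the subgroup generated by the ``commuting wedges'' $x\wedge y$ with $x,y\in G$, $[x,y]=1$ (these do land in $M(G)$, since $x\wedge y\mapsto[x,y]=1$). Dualizing the description of $B_0(G)$ by restriction to abelian subgroups — each abelian $A=\langle a,b\rangle$ contributing to $M(G)$, via $A\wedge A\to G\wedge G$, precisely the cyclic subgroup $\langle a\wedge b\rangle$ — yields a natural isomorphism $B_0(G)\cong\mathrm{Hom}(M(G)/M_0(G),\mathbb Q/\mathbb Z)$. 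Since a finite abelian group is (non-canonically) isomorphic to its Pontryagin dual, it suffices to prove that $\widetilde B_0(G):=M(G)/M_0(G)$ is an isoclinism invariant.

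As $M(G)$ is central in $G\wedge G$, so is $M_0(G)$, and I may form the quotient group $G\curlywedge G:=(G\wedge G)/M_0(G)$, sitting in a central extension $0\to\widetilde B_0(G)\to G\curlywedge G\xrightarrow{\ \kappa\ }G'\to 0$, $x\curlywedge y\mapsto[x,y]$. The plan is to show that the pair $(G\curlywedge G,\kappa)$ depends only on the isoclinism datum $\bigl(\bar G,\,G',\,c\colon\bar G\times\bar G\to G'\bigr)$, where $\bar G:=G/Z(G)$ and $c(\bar x,\bar y)=[x,y]$ is the (well-defined) commutator pairing. Two points must be checked. First, the generator $x\curlywedge y$ depends only on $xZ(G)$ and $yZ(G)$: for $z\in Z(G)$ one computes in $G\curlywedge G$ that $xz\curlywedge y=({}^{x}z\curlywedge{}^{x}y)(x\curlywedge y)=(z\curlywedge{}^{x}y)(x\curlywedge y)=x\curlywedge y$, the middle factor being trivial because $z$ is central, and symmetrically in the second variable; hence $G\curlywedge G$ is generated by symbols indexed by $\bar G\times\bar G$. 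Second, each defining relation can be rewritten purely in terms of $\bar G$ and $c$: the bimultiplicativity relations $xx'\curlywedge y=({}^{x}x'\curlywedge{}^{x}y)(x\curlywedge y)$ (and its mirror) involve only the conjugation action of $G$ on itself, which descends to $\bar G$; the relation $x\curlywedge x=1$ becomes $\bar x\curlywedge\bar x=1$; and ``$x\curlywedge y=1$ whenever $[x,y]=1$'' becomes ``$\bar x\curlywedge\bar y=1$ whenever $c(\bar x,\bar y)=1$ in $G'$''. Thus $G\curlywedge G$, together with the map $\kappa$ given on generators by $\bar x\curlywedge\bar y\mapsto c(\bar x,\bar y)$, is a functor of the isoclinism datum alone.

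Granting this, an isoclinism, i.e.\ a compatible pair $\phi\colon\bar G_1\xrightarrow{\sim}\bar G_2$, $\theta\colon G_1'\xrightarrow{\sim}G_2'$ with $\theta\circ c_1=c_2\circ(\phi\times\phi)$, matches the generators and all relations of $G_1\curlywedge G_1$ with those of $G_2\curlywedge G_2$, hence induces an isomorphism $G_1\curlywedge G_1\xrightarrow{\sim}G_2\curlywedge G_2$ intertwining $\kappa_1$ and $\kappa_2$ via $\theta$; restricting to kernels gives $\widetilde B_0(G_1)\cong\widetilde B_0(G_2)$, and dualizing yields $B_0(G_1)\cong B_0(G_2)$. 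The one genuinely delicate step is the second bullet above: one must verify carefully that the nonabelian exterior-square relations, including all occurrences of the conjugation action ${}^{x}y$, descend to honest identities among the coset-indexed symbols and require no information beyond $\bar G$ and the pairing $c$. Once that presentation of $G\curlywedge G$ is secured, everything else is formal diagram manipulation.
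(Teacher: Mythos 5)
The paper offers no proof of this lemma, simply citing Moravec \cite{Mo2}, and your argument is a correct reconstruction of precisely that source's method: the dual homological description $B_0(G)\cong\mathrm{Hom}\bigl(M(G)/M_0(G),\mathbb Q/\mathbb Z\bigr)$ together with the observation that $G\curlywedge G=(G\wedge G)/M_0(G)$, with its map to $[G,G]$, admits a presentation by coset-indexed generators and relations depending only on the isoclinism datum $(G/Z(G),[G,G],c)$. The two checks you single out — independence of coset representatives, which works because the offending factors $z\curlywedge{}^{x}y$ are commuting wedges and hence die in the quotient, and the descent of the exterior-square relations to barred symbols — are exactly the verifications carried out in \cite{Mo2}, and they go through as you state.
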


\begin{remark} \label{rem:iso}
The assertion of this lemma was stated in \cite{HKK} as a
conjecture. It was generalized in \cite{BB} by showing that the
quotient varieties $V/G_i$ of isoclinic groups are stably
birationally equivalent. Note also a striking parallel with a result
of Yadav \cite{Ya1}, establishing the isomorphism $\Sha (G_1)\cong
\Sha (G_2)$ for isoclinic groups.
\end{remark}

\begin{proof}[Proof of Proposition $\ref{prop:extra}$]
Recall that the centre $Z$ of $G$ is of order $p$ and the quotient
$G/Z$ is a (nontrivial) elementary abelian $p$-group of order
$p^{2n}$. There is a classification of such groups (see, e.g.,
\cite[pp.~203--208]{Go}). Since all elementary $p$-groups of the
same order are isoclinic, in light of Lemma \ref{lem:iso} we may and
will consider only groups of exponent $p$. So from now on
\begin{equation} \label{eq:extra}
G=\left<z,x_1,\dots x_n,x_{n+1},\dots ,x_{2n} \, | \,
[x_i,x_{i+n}]=z, \, i=1,\dots ,n\right>
\end{equation}
(all other generators commute and are all of exponent $p$).

Our computations are based on \cite[Lemma~5.1]{Bo} (we use the
notation of \cite[Section~5]{Pe}). Namely, for a vector space
$E/\mathbb F_p$ we denote by $E^{\vee}$ the dual space. We identify
$\bigwedge^i(E^{\vee})$ with $(\bigwedge^iE)^{\vee}$ and denote it
by $\bigwedge^iE^{\vee}$. For any subset $B$ in $\bigwedge^iE$
(resp. $\bigwedge^iE^{\vee}$) we denote by $B^{\perp}$ its
orthogonal in $\bigwedge^iE^{\vee}$ (resp. $\bigwedge^iE$). We view
the abelian $p$-groups $Z=\left<z\right>$ and $G/Z=\left<\bar x_i,
i=1,\dots ,2n\right>$ as vector spaces over $\mathbb F_p$ and denote
them by $V$ and $U$ respectively (to ease the notation, we suppress
bars over the $x_i$ throughout below). Then we have the following
central extension of vector spaces
$$
0\to V\to G\to U\to 0,
$$
which gives rise to a surjective linear map
$\gamma\colon\bigwedge^2U\to V$ and the induced injective dual map
$\gamma^{\vee}\colon V^{\vee}\to\bigwedge^2U^{\vee}$. Let $K^2$
denote the image of $\gamma^{\vee}$, and let
$S^2=(K^2)^{\perp}\subset\bigwedge^2U$. Let $S^2_{\text{\rm{dec}}}$
be the subgroup of $S^2$ generated by the decomposable elements of
the form $u\wedge v$ ($u,v\in U$). Finally, let
$K^2_{\text{\rm{max}}}\supset K^2$ be the orthogonal to
$S^2_{\text{\rm{dec}}}$ in $\bigwedge^2U^{\vee}$. Then by
\cite[Lemma~5.1]{Bo} we have an isomorphism $B_0(G)\cong
K^2_{\text{\rm{max}}}/K^2$.

In our case, we have
$$
\gamma^{\vee}(\check{z})=\sum_{i=1}^n \check{x}_i \wedge
\check{x}_{i+n},
$$
where \v{} indicates to elements of the dual basis. Hence
$S^2\subset \bigwedge^2U$ is the hyperplane
$$
\{\sum_{i<j}\alpha_{i,j} x_i\wedge x_j \, | \, \sum_{i=1}^n
\alpha_{i,i+n}=0\}.
$$
It is spanned by the elements $x_i\wedge x_j$ ($j>i$, $j\ne i+n)$
and $x_i\wedge x_{i+n}-x_n\wedge x_{2n}$
($i=1,\dots ,n-1$). Each of the latter elements can be represented in the form
$$
x_i\wedge x_{i+n}-x_n\wedge x_{2n} = (x_i-x_n)\wedge
(x_{i+n}+x_{2n}) - x_i\wedge x_{2n} + x_n \wedge x_{i+n},
$$
i.e., as a sum of decomposable elements of $S^2$. Hence each of the
generators of $S^2$ belongs to $S^2_{\text{\rm{dec}}}$, and we have
$S^2=S^2_{\text{\rm{dec}}}$, whence $K^2_{\text{\rm{max}}}=K^2$, so
$B_0(G)=0$.
\end{proof}

This result can be extended to another class of groups, so-called
almost extraspecial groups. Recall (see, e.g., \cite{CT}) that a
$p$-group $G$ is called {\it almost extraspecial} if its centre $Z(G)$ is
cyclic of order $p^2$, and the Frattini subroup $\Phi (G)$ coincides
with the derived subgroup $[G,G]$ and they are both cyclic of order
$p$. Any such group is of order $p^{2n+2}$, $n\ge 1$, and any two
almost extraspecial groups of the same order are isomorphic.

\begin{coro}
If $G$ is an almost extraspecial $p$-group, then $B_0(G)=0$.
\end{coro}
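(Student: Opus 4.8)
The plan is to reduce the almost extraspecial case to the extraspecial case already handled in Proposition \ref{prop:extra}, using the isoclinism invariance of the Bogomolov multiplier (Lemma \ref{lem:iso}). The key observation is that an almost extraspecial $p$-group $G$ of order $p^{2n+2}$ is isoclinic to an extraspecial $p$-group. Indeed, write $Z=Z(G)$, which is cyclic of order $p^2$, and let $Z_0$ be its unique subgroup of order $p$; by definition $[G,G]=\Phi(G)=Z_0$. Then $G/Z$ is elementary abelian of order $p^{2n}$, and the commutator induces a nondegenerate alternating form $G/Z\times G/Z\to Z_0$, exactly as in the extraspecial case. So $G/Z(G)\cong(\mathbb Z/p)^{2n}$ and $[G,G]\cong\mathbb Z/p$ are abstractly the same as for an extraspecial group of order $p^{2n+1}$, and — this is the point that needs checking — the commutator pairings are compatible, which gives an isoclinism. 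By Lemma \ref{lem:iso}, $B_0(G)\cong B_0(E)$ for such an extraspecial $E$, and the latter vanishes by Proposition \ref{prop:extra}.

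First I would make precise the claim that $G$ is isoclinic to an extraspecial group. Recall that isoclinism requires isomorphisms $\theta\colon G/Z(G)\to E/Z(E)$ and $\psi\colon [G,G]\to[E,E]$ such that the diagram relating the two commutator maps $G/Z(G)\times G/Z(G)\to[G,G]$ commutes. Since for an almost extraspecial group the commutator map factors through $G/Z(G)$ and lands in the order-$p$ subgroup $Z_0=[G,G]$, and since for any $n$ there is (up to isomorphism) only one nondegenerate alternating form of rank $2n$ over $\mathbb F_p$, one can choose bases of $G/Z(G)$ and $E/Z(E)$ putting both commutator forms in the standard symplectic shape $\sum_{i=1}^n x_i\wedge x_{i+n}$; matching these bases gives $\theta$, and $\psi$ is the unique isomorphism $Z_0\to[E,E]$ of cyclic groups of order $p$ compatible with the chosen identifications. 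This produces the required isoclinism.

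Alternatively — and this is probably cleaner to write — I would avoid invoking Lemma \ref{lem:iso} and instead rerun the computation in the proof of Proposition \ref{prop:extra} directly for $G$. The point is that \cite[Lemma~5.1]{Bo}, or rather its formulation via $K^2_{\text{\rm{max}}}/K^2$, depends only on the central extension $0\to V\to G\to U\to 0$ where $V=[G,G]\cong\mathbb F_p$ and $U=G/[G,G]$, together with the map $\gamma\colon\bigwedge^2U\to V$. For an almost extraspecial group the extra generator coming from $Z(G)/[G,G]$ simply contributes a further direct summand $\mathbb F_p$ to $U$ on which $\gamma$ vanishes, so $U\cong\mathbb F_p^{2n+1}$ and $\gamma^{\vee}(\check z)=\sum_{i=1}^n\check x_i\wedge\check x_{i+n}$ exactly as before, now living inside $\bigwedge^2U^{\vee}$ for the larger $U$. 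The computation identifying $S^2$ with $S^2_{\text{\rm{dec}}}$ goes through verbatim, the decomposition $x_i\wedge x_{i+n}-x_n\wedge x_{2n}=(x_i-x_n)\wedge(x_{i+n}+x_{2n})-x_i\wedge x_{2n}+x_n\wedge x_{i+n}$ being unaffected by the presence of the extra coordinate, and we again get $K^2_{\text{\rm{max}}}=K^2$, hence $B_0(G)=0$.

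The main obstacle, such as it is, is purely bookkeeping: in the first approach one must verify carefully that the commutator forms really are \emph{compatibly} isomorphic (so that the isoclinism diagram commutes, not just that the source, target, and quotient are abstractly isomorphic), and in the second approach one must confirm that adjoining the extra central generator does not create new decomposable relations or destroy the hyperplane description of $S^2$. Neither is genuinely hard, since the classification of almost extraspecial $p$-groups (any two of the same order are isomorphic, as recalled in the text) pins the group down completely; but the cleanest writeup is the second one, which is essentially a one-line remark that the proof of Proposition \ref{prop:extra} never used that $U$ had even dimension.
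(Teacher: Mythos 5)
Your first argument is correct and is essentially the paper's own proof: the paper also deduces the corollary from Proposition \ref{prop:extra} via Lemma \ref{lem:iso}, the only difference being how the isoclinism is exhibited. Instead of appealing to the uniqueness of the nondegenerate alternating form of rank $2n$ over $\mathbb F_p$, the paper produces the extraspecial partner inside $G$: the subgroup $H$ generated by all elements of order $p$ is extraspecial of order $p^{2n+1}$, $z^p$ generates $Z(H)=[H,H]$, and one gets compatible isomorphisms $G/Z(G)\cong H/Z(H)$ and $[G,G]\cong[H,H]$. Your symplectic-basis construction is a perfectly good substitute; the one point to make explicit is that $[G,G]$, being a normal subgroup of order $p$ of a $p$-group, is central, hence equals the unique order-$p$ subgroup $Z_0$ of the cyclic centre, so the commutator pairing on $G/Z(G)$ really is a $Z_0$-valued nondegenerate alternating bilinear form (bilinearity because $[G,G]\le Z(G)$, nondegeneracy by the definition of the centre, and $G/Z(G)$ is elementary abelian since $G^p\le\Phi(G)=[G,G]\le Z(G)$).

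The second route, which you call the cleanest and propose to write up, is not the one-line remark you describe, and as stated it has a gap. The formula $B_0(G)\cong K^2_{\text{\rm{max}}}/K^2$ from \cite[Lemma~5.1]{Bo} is used in the proof of Proposition \ref{prop:extra} only after the isoclinism reduction to the presentation (\ref{eq:extra}), i.e.\ to a central extension whose extension data is carried by the commutator form $\gamma$ alone. An almost extraspecial group is never of this kind: its central generator $z$ has order $p^2$ and $z^p$ generates $[G,G]$, so the extension $0\to[G,G]\to G\to G/[G,G]\to 0$ that you feed into the lemma has a nontrivial $p$-th power (Bockstein) component in addition to $\gamma$. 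The parity of $\dim U$ is indeed irrelevant, but the exponent is not obviously so: you would have to verify that the hypotheses of the lemma (in \cite{Bo}, or in the form recalled in \cite{Pe}) cover such extensions, or else first replace $G$ by an isoclinic group of the allowed type (for instance $E\times\mathbb Z/p$ with $E$ extraspecial, or the subgroup $H$ above) --- and that is exactly your first argument again, so nothing is saved. The numerical outcome of your computation is of course consistent with $B_0(G)=0$, but the justification is incomplete; present the isoclinism argument.
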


\begin{proof}
The subgroup $H$ of $G$ generated by all elements of order $p$ is
extraspecial of order $p^{2n+1}$. If we denote by $z$ a generator of
$Z(G)$, then $z^p$ can be taken as a generator of $Z(H)$, and we
obtain compatible isomorphisms $G/Z(G)\cong H/Z(H)$ (both are
elementary abelian of order $p^{2n}$) and $[G,G]\cong [H,H]$ (both
are cyclic of order $p$), so $G$ and $H$ are isoclinic. The
assertion of the corollary now follows from Proposition
\ref{prop:extra}.
\end{proof}

\begin{prop} \label{prop:Black}
If $G$ is a Blackburn group, then $B_0(G)=0$.

\end{prop}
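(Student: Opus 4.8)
The plan is to mimic the treatment of case (vii) above: reduce the computation to the Sylow subgroups of $G$, and then run through the classification of Blackburn groups. Recall that, by definition, $G$ is a Blackburn group when the intersection $D(G)$ of all its non-normal subgroups is nontrivial, the condition holding vacuously (with $D(G)=G$) when $G$ is a Dedekind group; Blackburn classified all such groups, and this list is what I would exploit. First, exactly as in case (vii) and in view of \cite{Bo}, \cite{BMP}, it is enough to prove that $B_0(S)=0$ for every Sylow subgroup $S$ of $G$. Indeed, $H^2(G,\mathbb{Q}/\mathbb{Z})$ is the direct sum of its $p$-primary parts; for each prime $p$, restriction to a Sylow $p$-subgroup $G_p$ is injective on the $p$-part and sends $B_0(G)$ into $B_0(G_p)$ (abelian subgroups of $G_p$ are abelian subgroups of $G$), so $B_0(G)$ embeds into $\bigoplus_p B_0(G_p)$.

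Next I would dispose of the Dedekind case: such a $G$ is either abelian or of the form $Q_8\times C_2^{k}\times A$ with $A$ abelian of odd order, hence isoclinic to $Q_8$, so that $B_0(G)=0$ by Proposition \ref{prop:extra} together with Lemma \ref{lem:iso}. For the non-Dedekind members of Blackburn's list I would check, case by case, that every Sylow subgroup $S$ is one of the following: an abelian group, whence $B_0(S)=0$ by definition; a cyclic $p$-group, likewise; a generalized quaternion $2$-group $Q_{2^n}$, which has a cyclic maximal subgroup and hence is covered by item (iv) of Proposition \ref{prop:rigid} (equivalently, $H_2(Q_{2^n})=0$); a $p$-group with a cyclic subgroup of index at most $p^2$, covered by items (iv)--(v); or a $p$-group isoclinic to an extraspecial one, covered by Proposition \ref{prop:extra} and Lemma \ref{lem:iso}. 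Combined with the reduction of the previous paragraph, this yields $B_0(G)=0$.

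The hard part will be the second step, which is essentially bookkeeping but must be carried out carefully against Blackburn's classification: one has to make sure that no Sylow subgroup of a Blackburn group escapes the above list. The subtle point is that a Sylow subgroup of a Blackburn group need not itself be a Blackburn group, so the global classification really is needed; in particular one must verify that the non-nilpotent Blackburn groups have only cyclic and generalized-quaternion Sylow subgroups, and that the sporadic $p$-group entries of small order all occur among items (iii)--(v) of Proposition \ref{prop:rigid} or are isoclinic to extraspecial groups. Once the list has been inspected, the vanishing of $B_0(G)$ follows from results already at our disposal, just as in cases (iii)--(vii).
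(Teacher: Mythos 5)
Your Sylow reduction is sound (and it is exactly the device the paper uses in its hardest case), but the proof stops where the actual content begins: the case-by-case verification against Blackburn's classification is only announced, and the checklist of admissible Sylow types you propose to verify is not the right one, so the verification as described would fail. Concretely, Blackburn's Theorem 1 allows a Blackburn $2$-group to be a direct product of several quaternion groups and abelian groups; a group such as $Q_8\times Q_8\times A$ is neither abelian, nor cyclic, nor generalized quaternion, has no cyclic subgroup of index at most $4$, and is not isoclinic to an extraspecial group (its derived subgroup has order $4$), so it escapes your list entirely; handling it requires the product formula $B_0(G_1\times G_2)=B_0(G_1)\times B_0(G_2)$ of \cite{Ka3}, which you never invoke. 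Likewise, your sub-goal that the non-nilpotent Blackburn groups have only cyclic and generalized quaternion Sylow subgroups is false: in Blackburn's Theorem 2, types (a) and (d) (and factors occurring in types (b) and (e)) are abelian-by-cyclic or contain large abelian-times-quaternion Sylow $2$-subgroups, and in type (c) the Sylow $2$-subgroup merely contains an abelian subgroup of index at most $4$. Such groups again fall outside your list; the paper disposes of them using item (vi) of Proposition \ref{prop:rigid} (retract rationality for abelian-by-cyclic groups, \cite{Ka2}), another ingredient absent from your toolkit.

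Moreover, the ``subtle point'' you flag---that a Sylow subgroup of a Blackburn group need not itself be Blackburn---is precisely where you miss the observation that makes the argument work: a subgroup of $S\le G$ that is non-normal in $S$ is automatically non-normal in $G$, so the intersection of the non-normal subgroups of $S$ contains the (nontrivial) intersection of those of $G$; hence every Sylow subgroup of a Blackburn group is Dedekind or Blackburn. The paper exploits this in type (c) to reduce to the $2$-group case of Blackburn's Theorem 1 without analysing the structure of $S_2$ at all, and it treats the $p$-group case and types (a), (b), (d), (e) directly at the level of $G$ (via abelian-by-cyclic and direct-product results) rather than Sylow-wise. As it stands, your proposal has a genuine gap: the decisive verification is deferred, and it cannot be completed with the results you allow yourself.
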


First recall the needed definitions.

\begin{defn} \label{def:Ded}
A group $G$ is called a {\it Dedekind group} if any subgroup of
$G$ is normal \cite[p. 33]{Be}.
\end{defn}

\begin{remark} \label{rem:Ded}
All Dedekind groups are classified \cite[pp.~33--34]{Be}. A Dedekind
group is either abelian, or a direct product of a quaternion group
of order 8 and an abelian group without elements of order 4. In both
cases we have $B_0(G)=0$.
\end{remark}

\begin{defn}
A non-Dedekind group $G$ is called a {\it Blackburn group}
if the intersection of all its non-normal subgroups is nontrivial.
\end{defn}

All such groups are classified  \cite{Bl}, and in the proof below we
proceed case by case.

\begin{proof}[Proof of Proposition $\ref{prop:Black}$]
If $G$ is a $p$-group, then, according to \cite[Theorem~1]{Bl},
$p=2$ and $G$ is either a direct product of quaternion groups and
abelian groups, or contains an abelian subgroup of index 2. In both
cases, $B_0(G)=0$, taking into account items (iii), (iv) above and
the formula $B_0(G_1\times G_2)= B_0(G_1)\times B_0(G_2)$
\cite{Ka3}.

So suppose that $G$ is not a $p$-group. By \cite[Theorem~2]{Bl},
there are five types of such groups. For types (a), (b), (d) and (e)
the assertion is an immediate consequence of earlier considerations.
Indeed, groups of types (a) and (d) are abelian-by-cyclic, and we
use item (v) above. In case (b), $G$ is a direct product of abelian
and quaternion groups, and the argument of the preceding paragraph
works.  Groups of type (e) are direct products of quaternion,
abelian, and abelian-by-cyclic groups, and we proceed as above.

It remains to consider case (c), where $G$ contains a subgroup $H$
of index 2 with the following property: $H$ has an index two abelian
subgroup $A$ of exponent $2^nk$, $k$ odd. Let $S_p$ denote a Sylow
$p$-subgroup of $G$. If $p$ is odd, then $S_p$ is abelian, hence
$B_0(S_p)=0$. Consider $S=S_2$. If $S$ is a Dedekind group, then
$B_0(S)=0$ in light of the remark after Definition \ref{def:Ded}. If
$S$ is not a Dedekind group, then the intersection of its non-normal
subgroups is nontrivial because each non-normal subgroup of $S$ is a
non-normal subgroup of $G$ and $G$ is a Blackburn group. So $S$ is a
Blackburn group too, and $B_0(S)=0$ (see the first paragraph of the
proof). Thus $B_0(S_p)=0$ for all $p$, and therefore $B_0(G)=0$
 \cite{Bo}, \cite{BMP} (see the first paragraph of the section).
\end{proof}

Theorem \ref{th:main} now follows from Propositions \ref{prop:extra}
and \ref{prop:Black}.

\section{Concluding remarks} \label{sec:concl}

We collect here several general remarks and open questions.

\begin{question}
Let $G$ be a group belonging to class (x) or (xi) of Proposition $\ref{prop:rigid}$.
Is it true that $B_0(G)=0$?
\end{question}

Here is a more general question:

\begin{question}
Let $G$ be a $\Sha$-rigid group.
Is it true that $B_0(G)=0$?
\end{question}

Note that there are groups $G$ with $B_0(G)=0$ that are not $\Sha$-rigid.
Say, so are first counter-examples to $\Sha$-rigidity constructed by Burnside
\cite{Bu2}: these are groups of order 32 for which it is known that
$B_0(G)=0$ \cite{CHKP}.

Returning to the list of Proposition \ref{prop:rigid} and looking at the last item,
we may ask the following parallel questions:

\begin{question}
\item[(i)]
Let $G=G_1*G_2$ be a central product of groups such that
$B_0(G_1)=B_0(G_2)=0$. Is it true that $B_0(G)=0$?
\item[(ii)]
Let $G=G_1*G_2$ be a central product of groups such that the
corresponding generically free linear quotients $V_1/G_1$ and
$V_2/G_2$ are stably rational. Is it true that so is $V/G$?
\end{question}

Definitely, it is much more tempting to understand whether there exists
some intinsic relationship between $\Sha$-rigidity and Bogomolov multiplier
behind the empirical observations presented in this paper. The interested reader is referred
to \cite{Ku2} for some speculations around these eventual ties.

\bigskip

\noindent {\it Acknowledgements}. The first author was supported in
part by the National Center for Theoretic Sciences (Taipei Office).
The second author was supported in part by the Minerva Foundation
through the Emmy Noether Research Institute of Mathematics and by
the Israel Science Foundation, grant 1207/12; this paper was mainly
written during his visit to the NCTS (Taipei) in 2012. Support of
these institutions is gratefully acknowledged.


\renewcommand{\refname}{\centering{References}}


\begin{thebibliography}{CHKP}




\bibitem[BVY]{BVY}
V. Bardakov, A. Vesnin, M. K. Yadav, Class preserving automorphisms
of unitriangular groups, Internat. J. Algebra Computation 22 (2012),
no.~3, 12500233, 17~pp.

\bibitem[Be]{Be}
Ya. Berkovich, Groups of prime power order, Vol.~1, Walter de
Gruyter, Berlin, 2008.

\bibitem[Bl]{Bl}
N. Blackburn,
Finite groups in which the nonnormal subgroups have nontrivial intersection,
J. Algebra 3 (1966), 30--37.

\bibitem[Bo]{Bo}
F. A. Bogomolov, The Brauer group of quotient spaces by linear group
actions, Izv. Akad. Nauk SSSR Ser. Mat. 51 (1987), 485--516; English
transl. in Math. USSR Izv. 30 (1988), 455--485.

\bibitem[BB]{BB}
F. A. Bogomolov, C. B\"ohning, Isoclinism and stable cohomology of
wreath products, \url{arXiv:1204.4747}.

\bibitem[BMP]{BMP}
F. A. Bogomolov, J. Maciel, T. Petrov, Unramified Brauer groups of
finite simple groups of Lie type $A_{\ell}$, Amer. J. Math. 126
(2004), 935--949.

\bibitem[Bu1]{Bu1}
W. Burnside, Theory of groups of finite order, 2nd ed., Cambridge Univ.
Press, Cambridge, 1911; reprinted by Dover Publications, New York, 1955.

\bibitem[Bu2]{Bu2}
W. Burnside, On the outer automorphisms of a group, Proc. London
Math. Soc. (2) 11 (1913), 40--42.

\bibitem[CT]{CT}
J. F. Carlson, J. Th\'evenaz, Torsion endo-trivial modules, Algebr.
Represent. Theory 3 (2000), 303--335.

\bibitem[CHKK]{CHKK}
H. Chu, S.-J. Hu, M. Kang, B. \`E. Kunyavskii,
 Noether's problem and the unramified Brauer group for groups of order $64$,
Int. Math. Res. Not. 2010, no.~12, 2329--2366.

\bibitem[CHKP]{CHKP}
H. Chu, S.-J. Hu, M. Kang, Yu. G. Prokhorov, Noether's problem for
groups of order 32, J. Algebra 320 (2008), 3022--3035.

\bibitem[CK]{CK}
H. Chu, M. Kang, Rationality of $p$-group actions,
J. Algebra 237 (2001), 673--690.


\bibitem[FS]{FS}
W. Feit, G. M. Seitz, On finite rational groups and related topics,
Illinois J. Math. 33 (1989), 103--131.

\bibitem[FN]{FN}
M. Fuma, Y. Ninomiya, ``Hasse principle'' for finite $p$-groups with
cyclic subgroups of index $p\sp 2$, Math. J. Okayama Univ. 46
(2004), 31--38.

\bibitem[Go]{Go}
D. Gorenstein, Finite groups, Harper and Row, N. Y., 1968.

\bibitem[HL]{HL}
A. Herman, Y. Li, Class preserving automorphisms of Blackburn
groups, J. Australian Math. Soc. 80 (2006), 351--358.

\bibitem[He1]{He1}
M. Hertweck, Class-preserving automorphisms of finite groups, J.
Algebra 241 (2001), 1--26.

\bibitem[He2]{He2}
M. Hertweck, Contributions to the integral representation theory of
groups, Habilitationsschrift, Univ. Stuttgart, 2004, available
online at
\url{http://elib.unistuttgart.de/opus/volltexte/2004/1638}.

\bibitem[HJ]{HJ}
M. Hertweck, E. Jespers, Class-preserving automorphisms and the
normalizer property for Blackburn groups, J. Group Theory  12
(2009),  157--169.

\bibitem[HKK]{HKK}
A. Hoshi, M. Kang, B. E. Kunyavskii, Noether's problem and
unramified Brauer groups, to appear in Asian J. Math.

\bibitem[HK]{HK}
S.-J. Hu, M. Kang, Noether's problem for some $p$-groups, in:
``Cohomological and geometric approaches to rationality problems'',
Progr. Math., vol.~282, Birkh\"auser, Boston, MA, 2010,
pp.~149--162.


\bibitem[Ka1]{Ka1}
M. Kang, Noether's problem for $p$-groups with a cyclic subgroup of index $p^2$,
Adv. Math. 226 (2011), 218--234.

\bibitem[Ka2]{Ka2}
M. Kang, Retract rational fields, J. Algebra 349 (2012), 22--37.

\bibitem[Ka3]{Ka3}
M. Kang, Bogomolov multipliers and retract rationality for semi-direct products,
\url{arXiv:1207.5867}.

\bibitem[KV1]{KV1}
M. Kumar, L. R. Vermani, ``Hasse principle'' for
extraspecial $p$-groups, Proc. Japan Acad. Ser. A Math. Sci. 76
(2000), 123--125.

\bibitem[KV2]{KV2}
M. Kumar, L. R. Vermani, ``Hasse principle'' for groups of
order $p\sp 4$, Proc. Japan Acad. Ser. A Math. Sci. 77 (2001), 95--98.

\bibitem[KV3]{KV3}
M. Kumar, L. R. Vermani, On automorphisms of some
$p$-groups, Proc. Japan Acad. Ser. A Math. Sci. 78 (2002), 46--50.

\bibitem[Ku1]{Ku1}
B. E. Kunyavski\u\i , The Bogomolov multiplier of finite simple
groups, in: ``Cohomological and geometric approaches to rationality
problems'', Progr. Math., vol.~282, Birkh\"auser, Boston, MA, 2010,
pp.~209--217.

\bibitem[Ku2]{Ku2}
B. Kunyavski\u\i , Local-global invariants of finite and infinite
groups: around Burnside from another side, preprint, 2012,
submitted.


\bibitem[La]{La}
R. Laue, On outer automorphism groups, Math. Z. 148 (1976),
177--188.

\bibitem[Mo1]{Mo1}
P. Moravec, Groups of order $p^5$ and their unramified Brauer
groups, J. Algebra 372 (2012), 320--327.

\bibitem[Mo2]{Mo2}
P. Moravec, Unramified Brauer groups and isoclinism, \url{
arXiv:1203.2422}.

\bibitem[On]{On}
T. Ono, A note on Shafarevich--Tate sets for finite groups, Proc.
Japan Acad. Ser. A Math. Sci. 74 (1998), 77--79.





\bibitem[OW]{OW}
T. Ono, H. Wada, ``Hasse principle'' for symmetric and
alternating groups,  Proc. Japan Acad. Ser. A Math. Sci. 75 (1999),
61--62.

\bibitem[Pe]{Pe}
E. Peyre, Unramified cohomology of degree $3$ and Noether's problem,
Invent. Math. 171 (2008), 191--225.



\bibitem[Sa1]{Sa}
D. J. Saltman, Noether's problem over an algebraically closed field,
Invent. Math. 77 (1984), 71--84.

\bibitem[Sa2]{Sa2}
D. J. Saltman, The Brauer group and the center of generic matrices,
J. Algebra 97 (1985), 53--67.

\bibitem[Su]{Su}
M. Suzuki, On finite groups with cyclic Sylow subgroups for all
odd primes, Amer. J. Math. 77 (1955), 657--691.

\bibitem[Wa]{Wa}
C. T. C. Wall, On the structure of finite groups with periodic
cohomology, preprint, 2010, available at
\url{www.liv.ac.uk/~ctcw/FGPCnew.pdf}.




\bibitem[Ya1]{Ya1}
M. K. Yadav, On automorphisms of some finite $p$-groups, Proc.
Indian Acad. Sci. (Math. Sci.) 118 (2008), 1--11.

\bibitem[Ya2]{Ya2}
M. K. Yadav,
Class preserving automorphisms of finite $p$-groups: A survey,
in: ``Groups St. Andrews 2009 in Bath., vol.~2'', London Math. Soc.
Lecture Note Ser., vol.~388,  Cambridge Univ. Press, Cambridge {\it et al.},
2011, pp.~569--579.


\end{thebibliography}
\end{document}